\newtheorem{theorem}{Theorem}
\theoremstyle{plain}
\newtheorem{definition}{Definition}
\newtheorem{example}{Example}
\numberwithin{equation}{section}
\begin{document}
\title[The Research on Rotational Surfaces in pseudo Euclidean 4-Space with
index 2]{The Research on Rotational Surfaces in pseudo Euclidean 4-Space
with index 2}
\author{Fatma ALMAZ$^{\ast}$}
\address{Department of Mathematics, Firat university, 23119 Elazi\u{g}/T\"{U}RK\.{I}YE}
\email{fb\_fat\_almaz@hotmail.com}
\author{M\.{I}hr\.{I}ban ALYAMA\c{C} K\"{U}LAHCI}
\address{Department of Mathematics, Firat university, 23119 Elazi\u{g}/T\"{U}RK\.{I}YE}
\email{mihribankulahci@gmail.com }
\thanks{This paper is in final form and no version of it will be submitted
for publication elsewhere.}
\subjclass[2000]{53B30, 53B50, 53C80}
\keywords{Pseudo Euclidean 4-space, surfaces of rotation, killing vector
field.}

\begin{abstract}
In this study, we define a brief description of the hyperbolic and elliptic
rotational surfaces using a curve and matrices in 4-dimensional
semi-Euclidean space with index 2. That is, we provide different types of
rotational matrices, which are the subgroups of $M$ by rotating a selected
axis in $E^{4}$. Also, we choose two parameter matrices groups of rotations
and we give the matrices of rotation corresponding to the appropriate
subgroup in 4-dimensional semi-Euclidean space. Therefore, we generate
surfaces of rotation using Killing vector fields in $E_{2}^{4}$ and we give
the Gaussian curvature and the mean curvature of the surfaces of rotation.
\end{abstract}

\maketitle

\section{Introduction}

From the past to the present many studies have been done that deal with
rotational surfaces from algebraic and geometric aspects. The rotational
surfaces are parametrized with the help of the Killing vector field.
Therefore, the different types of matrices of rotations which are the
subgroups of a manifold corresponding to rotation about a chosen axis in the
arbitrary 4D-space are expressed. Hence, the two parameter matrices groups
of rotations can be chosen and the matrices of rotation corresponding to the
appropriate subgroup of an arbitrary 4D-space are expressed. To mention
briefly for the publications taken as reference related to the subject
studied. In \cite{1}, the geometric quantities associated with the concept
of surfaces and the indicatrix of a surface are discussed in
four-dimensional Galilean space by the authors. In \cite{2}, the brief
description of rotational surfaces are given using a curve and matrices in
4-dimensional (4D) Galilean space. Also, choosing two parameter matrices
groups of rotations, the matrices of rotation corresponding to the
appropriate subgroup in Galilean 4-space, the rotated surfaces are expressed
by the authors. In \cite{3,4}, the authors gave magnetic rotated surfaces in
lightlike cone $Q^{2}\subset E_{1}^{3}$. Furthermore, the conditions being
geodesic on rotational surface generated by magnetic curve are expressed
with the help of Clairaut's theorem. In \cite{5}, the representation
formulas of non-null curves are expressed in semi-Euclidean 4-space $%
E_{2}^{4}$ and some certain results of describing the nun-null normal curve
are presented in $E_{2}^{4}$. In \cite{7,8}, the rotational surfaces are
studied by different authors in Minkowski 4-space. In \cite{9}, the some
issues of displaying two-dimensional surfaces in 4D space are examined by
authors. In \cite{14}, the translation surface in the case being harmonic
surface are mainly studied, the necessary and sufficient conditions of being
semi-parallel surfaces by considering semi-parallel condition given by the
authors. In \cite{15}, the surfaces of revolution are characterized in the
three dimensional pseudo-Galilean space.

\section{Preliminaries}

Let $E_{2}^{4}$ denote the $4-$dimensional pseudo-Euclidean space with
signature $(2,4)$, that is, the real vector space $%
%TCIMACRO{\U{211d} }%
%BeginExpansion
\mathbb{R}
%EndExpansion
^{4}$ endowed with the metric $\left\langle ,\right\rangle _{E_{2}^{4}}$
which is defined by 
\begin{equation}
\left\langle ,\right\rangle
_{E_{2}^{4}}=g=-dx_{1}^{2}-dx_{2}^{2}+dx_{3}^{2}+dx_{4}^{2},  \tag{2.1}
\end{equation}%
where $(x_{1},x_{2},x_{3},x_{4})$ is a standard rectangular coordinate
system in $E_{2}^{4}$.

Recall that an arbitrary vector $v\in E_{2}^{4}\backslash \{0\}$ can have
one of three characters: it can be space-like if $g(v,v)>0$ or $v=0,$
time-like if $g(v,v)<0$ and null if $g(v,v)=0$ and $v\neq 0.$

The norm of a vector $v$ is given by $\parallel v\parallel =\sqrt{g(v,v)}$
and two vectors $v$ and $w$ are said to be orthogonal if $g(v,w)=0$. An
arbitrary curve $x(s)$ in $E_{2}^{4}$ can locally be space-like, time-like
or null.

A space-like or time-like curve $x(s)$ has unit speed, if $g(x^{\prime
},x^{\prime })=\pm 1.$

Let $%
(x_{1},x_{2},x_{3},x_{4}),(y_{1},y_{2},y_{3},y_{4}),(z_{1},z_{2},z_{3},z_{4}) 
$ be any three vectors in $E_{2}^{4}$. The pseudo Euclidean cross product is
given as 
\begin{equation}
x\wedge y\wedge z=%
\begin{pmatrix}
-i_{1} & -i_{2} & i_{3} & i_{4} \\ 
x_{1} & x_{2} & x_{3} & x_{4} \\ 
y_{1} & y_{2} & y_{3} & y_{4} \\ 
z_{1} & z_{2} & z_{3} & z_{4}%
\end{pmatrix}%
,  \tag{2.2}
\end{equation}%
where $i_{1}=\left( 1,0,0,0\right) ,i_{2}=\left( 0,1,0,0\right)
,i_{3}=\left( 0,0,1,0\right) ,i_{4}=\left( 0,0,0,1\right) $, \cite{11,12,13}.

The pseudo-Riemannian sphere $S_{2}^{3}\left( m,r\right) $ centered at $m\in
E_{2}^{4}$ with radius $r>0$ of $E_{2}^{4}$ is defined by 
\begin{equation*}
S_{2}^{3}\left( m,r\right) =\left\{ x\in E_{2}^{4}:\left\langle
x-m,x-m\right\rangle =r^{2}\right\} .
\end{equation*}

The pseudo-hyperbolic space $H_{1}^{3}\left( m,r\right) $ centered at $m\in
E_{2}^{4}$ with radius $r>0$ of $E_{2}^{4}$ is defined by%
\begin{equation*}
H_{1}^{3}\left( m,r\right) =\left\{ x\in E_{2}^{4}:\left\langle
x-m,x-m\right\rangle =-r^{2}\right\} .
\end{equation*}

The pseudo-Riemannian sphere $S_{2}^{3}\left( m,r\right) $ is diffeomorfic
to $%
%TCIMACRO{\U{211d} }%
%BeginExpansion
\mathbb{R}
%EndExpansion
^{2}\times S$ and the pseudo-hyperbolic space $H_{1}^{3}\left( m,r\right) $
is diffeomorfic to $S^{1}\times 
%TCIMACRO{\U{211d} }%
%BeginExpansion
\mathbb{R}
%EndExpansion
^{2}$. The hyperbolic space $H^{3}\left( m,r\right) $ is given by%
\begin{equation*}
H^{3}\left( m,r\right) =\left\{ x\in E_{2}^{4}:\left\langle
x-m,x-m\right\rangle =-r^{2},x_{1}>0\right\} .
\end{equation*}

Let $\Psi :M\rightarrow E_{2}^{4}$ be an isometric immersion of oriented
pseudo-Riemannian submanifold $M$ into $E_{2}^{4}$. Henceforth, a
submanifold in $E_{2}^{4}$ always means pseudo-Riemannian. Let $\overset{-}{%
\nabla }$ be the Levi-Civita connection of $E_{2}^{4}$ and $\nabla $ be the
induced connection on $M$. Also, for any vector fields $X,Y$ tangent to $M$,
we get the Gaussian formula 
\begin{equation}
\overset{-}{\nabla }_{X}^{{}}Y=\nabla _{X}^{{}}Y+h(X,Y),  \tag{2.3}
\end{equation}%
where $h$ is the second fundamental form which is symmetric in $X$ and $Y$.
For a unit normal vector field $\xi $, the Weingarten formula is defined by 
\begin{equation}
\overset{-}{\nabla }_{X}^{{}}\xi =-A_{\xi }X+D_{\xi}X,  \tag{2.4}
\end{equation}%
where $A_{\xi }$ is the Weingarten map or the shape operator with respect to 
$\xi $ and $D $ is the normal connection. The Weingarten map $A_{\xi }$ is a
self-adjoint endomorphism of $TM$ which cannot be diagonalized generally. It
is known that $h$ and $A_{\xi }$ are related by 
\begin{equation}
\left\langle h(X,Y),\xi \right\rangle =\left\langle A_{\xi }X,Y\right\rangle
.  \tag{2.5}
\end{equation}

The covariant derivative $\overset{\sim }{\nabla }h$ of the second
fundamental form $h$ is given by%
\begin{equation}
\overset{\sim }{\nabla }_{X}h\left( Y,Z\right) =\nabla _{X}^{\bot }h\left(
Y,Z\right) -h\left( \nabla _{X}^{{}}Y,Z\right) -h\left( Y,\nabla
_{X}^{{}}Z\right) ,  \tag{2.6}
\end{equation}%
where $\nabla ^{\bot }$ indicates the linear connection induced on the
normal bundle $T^{\bot }M$. Also, Codazzi equation is given by%
\begin{equation}
\overset{\sim }{\nabla }_{X}h\left( Y,Z\right) =\overset{\sim }{\nabla }%
_{Y}h\left( X,Z\right) .  \tag{2.7}
\end{equation}

Let $e_{1},e_{2},...,e_{m}$ be a local orthonormal frame field in $E_{s}^{m}$
such that $e_{1},e_{2},...,e_{n}$ are tangent to $M^{n}$ and $\left\{
e_{n+1},...,e_{m}\right\} $ are normal to $M^{n}$. Let $%
w_{1},w_{2},...,w_{m} $ be the coframe of $e_{1},e_{2},...,e_{m}$. We'll
make use of the following convention on the ranges of indices $1\leq
i,j,...\leq n,n+1\leq s,t,...\leq 4,1\leq A,B,...\leq 4$. Also, $w_{A}\left(
e_{B}\right) =\delta _{AB}$ and the pseudo-Riemannian metric on $E_{s}^{m}$
is given by 
\begin{equation}
ds^{2}=\underset{i}{\overset{n}{\sum }}\varepsilon _{A}w_{A}^{2};\varepsilon
_{A}=\left\langle e_{A},e_{A}\right\rangle =\pm 1.  \tag{2.8}
\end{equation}

Let $w_{A}$ be the dual 1-form of $e_{A}$ defined by $w_{A}X=\left\langle
e_{A},X\right\rangle $. Also, the connection forms $w_{AB}$ are defined by%
\begin{equation}
de_{A}=\sum \varepsilon _{B}w_{AB}e_{B};w_{AB}+w_{BA}=0.  \tag{2.9}
\end{equation}

After, the structure equations of $E_{2}^{4}$ are written as follows%
\begin{equation}
dw_{A}=\sum_{B}\varepsilon _{B}w_{AB}\wedge w_{B};dw_{A}=\sum_{C}\varepsilon
_{C}w_{AC}\wedge w_{CB.}  \tag{2.10}
\end{equation}

The canonical forms $\left\{ w_{A}\right\} $ and the connection forms $%
\left\{ w_{AB}\right\} $ restricted to $M^{n}$ are also indicated by the
same symbols. Also, we get 
\begin{equation*}
w_{s}=0,s=n+1,...,4
\end{equation*}%
and since $w_{s}$ are zero forms on $M^{n}$, there are symmetric tensor $%
h_{ij}^{s}$ by Cartan's lemma such%
\begin{equation}
w_{is}=\sum_{j}\varepsilon _{j}h_{ij}^{s}w_{j};h_{ij}^{s}=h_{ji}^{s}. 
\tag{2.11}
\end{equation}

The mean curvature vector $H$ of $M^{n}$ in $E_{s}^{m}$ is given by 
\begin{equation}
H=\frac{1}{2}\overset{m}{\underset{s=n+1}{\sum }}\underset{i=1}{\overset{n}{%
\sum }}\varepsilon _{j}\varepsilon _{s}h_{ij}^{s}e_{s}.  \tag{2.12}
\end{equation}

Also, the covariant differentiation of $e_{i}$ is given by%
\begin{equation*}
de_{i}=\sum_{A}\varepsilon _{A}w_{iA}e_{A}\text{ or }\overset{-}{\nabla }%
_{e_{i}}^{{}}e_{j}=\sum_{B}\varepsilon _{B}w_{jB}\left( e_{i}\right) e_{B}%
\text{,}
\end{equation*}%
\cite{6,10,11}.

\begin{definition}
\cite{10}, A one-parameter group of Diffeomorphisms of a manifold $M$ is a
regular map $\psi:M\times%
%TCIMACRO{\U{211d} }%
%BeginExpansion
\mathbb{R}
%EndExpansion
\rightarrow M$, such that $\psi_{t}(x)=\psi(x,t),$ where

\begin{enumerate}
\item $\psi_{t}:M\rightarrow M$ is a Diffeomorphism

\item $\psi_{0}=id$

\item $\psi_{s+t}=\psi_{s}o\psi_{t}.$
\end{enumerate}
\end{definition}

This group is attached with a vector field $W$ given by $\frac{d}{dt}\psi
_{t}(x)=W(x),$ and the group of Diffeomorphism is said to be as the flow of $%
W$.

\begin{definition}
If a one-parameter group of isometries is generated by a vector field $W$,
then this vector field is called as a Killing vector field, \cite{10}.
\end{definition}

\begin{definition}
Let $W$ be a vector field on a smooth manifold $M$ and $\psi_{t}$ be the
local flow generated by $W$. For each $t\in%
%TCIMACRO{\U{211d} }%
%BeginExpansion
\mathbb{R}
%EndExpansion
,$ the map $\psi_{t}$ is Diffeomorphism of $M$ and given a function $f$ on $%
M $, we consider the Pull-back $\psi_{t}f$. We define the Lie derivative of
the function $f$ as to $W$ by%
\begin{equation*}
L_{_{W}}f=\underset{t\longrightarrow0}{\lim}\underset{}{\left( \frac{\psi
_{t}f-f}{t}\right) =\frac{d\psi_{t}f}{dt}_{t=0}}.
\end{equation*}

Let $g_{xy}$ be any pseudo-Riemannian metric, then the derivative is given
as 
\begin{equation*}
L_{_{W}}g_{xy}=g_{xy,z}W^{z}+g_{xz}W_{,y}^{z}+g_{zy}W_{,x}^{z}.
\end{equation*}

In Cartesian coordinates in Euclidean spaces where $g_{xy,z}=0,$ and the Lie
derivative is given by%
\begin{equation*}
L_{_{W}}g_{xy}=g_{xz}W_{,y}^{z}+g_{zy}W_{,x}^{z}.
\end{equation*}

In \cite{10}, the vector $W$ generates a Killing field if and if only 
\begin{equation*}
L_{_{W}}g=0.
\end{equation*}
\end{definition}

\section{The surfaces of rotation in $E_{2}^{4}$}

In this chapter, we provides a description of surfaces of rotation in $%
E_{2}^{4}$. Here, we have used the metric (2.1). Therefore, we will provide
different types of matrices of rotations, which are the subgroups of $M$ by
rotated a selected axis in $E^{4}$. Hence, we will choose two parameter
matrices groups of rotations. In particular, we have defined a brief
description of rotational surfaces in four dimensional $E_{2}^{4}$ and we
give the rotational matrices corresponding to the appropriate subgroup in $%
E_{2}^{4}$. Hence, we generate the rotational surfaces.

The rotation matrices are replaced by Lorentz transformation as follows 
\begin{equation}
M^{T}gM=g,  \tag{3.1}
\end{equation}%
where $M^{T}$ is the transpoze, $g$ is the metric matrix of $E_{2}^{4}$ and
for the metric (2.1).

Let's obtain the set of all $4\times 4$ type matrices satisfying (3.1). The
Lorentz group is a subgroup of the Diffeomorphisms group in $E_{2}^{4}.$

\begin{theorem}
Let the pseudo-Euclidean group be a subgroup of the Diffeomorphisms group in 
$E_{2}^{4}$ and let $W$ be vector field which generate the isometries. Then,
the killing vector field associated with the metric $g$ is given as%
\begin{equation*}
W(\xi ,\varrho ,\vartheta ,\eta )=a\left( \eta \partial \xi +\xi \partial
\eta \right) +b\left( \vartheta \partial \varrho +\varrho \partial \vartheta
\right) +c\left( \vartheta \partial \xi +\xi \partial \vartheta \right)
\end{equation*}%
\begin{equation}
+d(\eta \partial \varrho +\varrho \partial \eta )+e(\vartheta \partial \eta
-\eta \partial \vartheta )+f\left( \xi \partial \varrho -\varrho \partial
\xi \right) ,  \tag{3.2}
\end{equation}%
where $a,b,c,d,e,f\in 
%TCIMACRO{\U{211d} }%
%BeginExpansion
\mathbb{R}
%EndExpansion
_{0}^{+}.$
\end{theorem}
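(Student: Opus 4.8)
The plan is to solve the Killing equation $L_W g = 0$ directly in the Cartesian coordinates $(\xi,\varrho,\vartheta,\eta)$ associated with the metric $g = -d\xi^2 - d\varrho^2 + d\vartheta^2 + d\eta^2$, where the diagonal metric matrix has entries $(\varepsilon_1,\varepsilon_2,\varepsilon_3,\varepsilon_4) = (-1,-1,+1,+1)$. Writing $W = \sum_z W^z \partial_z$, the formula recorded in Definition~3 gives, since $g_{xy,z} = 0$ in Cartesian coordinates, $L_W g_{xy} = g_{xz} \partial_y W^z + g_{zy} \partial_x W^z = \varepsilon_x \partial_y W^x + \varepsilon_y \partial_x W^y$. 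Hence the condition $L_W g = 0$ becomes the system $\varepsilon_x \partial_y W^x + \varepsilon_y \partial_x W^y = 0$ for all pairs $x,y$ (including $x=y$, which forces $\partial_x W^x = 0$, i.e. each component is independent of its own coordinate variable).

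First I would extract the diagonal equations $\partial_\xi W^\xi = \partial_\varrho W^\varrho = \partial_\vartheta W^\vartheta = \partial_\eta W^\eta = 0$, and then the six off-diagonal equations. Because the metric is flat, the standard argument (differentiate the off-diagonal relations and use the diagonal ones) shows every component $W^x$ is an affine function of the coordinates; combined with the constraints, the space of solutions splits into the translations (constants) and the linear part, which is exactly the Lie algebra of the pseudo-orthogonal group $O(2,2)$ preserving $g$, a six-dimensional space. Since the theorem is visibly concerned with the rotational (homogeneous linear) part — the subgroup fixing the origin, matching the Lorentz-type condition $M^T g M = g$ from (3.1) — I would restrict to linear $W$, write $W^x = \sum_y B^x{}_y\, \text{coord}_y$, and translate the six scalar equations into conditions on the $4\times 4$ matrix $B$: they say precisely that $gB$ is skew-symmetric, i.e. $(gB)^T = -gB$. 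The space of such $B$ is $6$-dimensional, and choosing the natural basis one reads off the six generators: the two ``elliptic'' rotations in the definite blocks $(\xi,\eta)\leftrightarrow$ wait—I would be careful here: the $(\vartheta,\eta)$ plane (both spacelike) and the $(\xi,\varrho)$ plane (both timelike) give genuine rotations $\vartheta\partial\eta - \eta\partial\vartheta$ and $\xi\partial\varrho - \varrho\partial\xi$ (the $e$ and $f$ terms), while the four mixed timelike–spacelike planes give hyperbolic rotations (boosts) $\eta\partial\xi + \xi\partial\eta$, $\vartheta\partial\varrho + \varrho\partial\vartheta$, $\vartheta\partial\xi + \xi\partial\vartheta$, $\eta\partial\varrho + \varrho\partial\eta$ (the $a,b,c,d$ terms). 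Taking an arbitrary real linear combination with coefficients $a,b,c,d,e,f$ (restricted to $\mathbb{R}_0^+$ as in the statement, since the overall scale and orientation of each one-parameter subgroup is the relevant freedom) yields exactly the vector field $W(\xi,\varrho,\vartheta,\eta)$ displayed in (3.2).

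The main obstacle I anticipate is bookkeeping rather than conceptual: one must keep the signs $\varepsilon_x$ straight so that the symmetric combinations (boosts between a $-$ and a $+$ direction) versus the antisymmetric combinations (rotations within a same-sign block) come out with the correct relative signs — a sign error would swap a ``$+$'' for a ``$-$'' inside one of the six parenthesized terms. A secondary point worth a sentence in the write-up is justifying the passage from ``$W$ generates isometries'' to ``$W$ is linear'': this is where one invokes that we seek the rotation subgroup (the stabilizer of the axis/origin), discarding the translational Killing fields, so that the relevant solution space is exactly $\mathfrak{o}(2,2) \cong \{B : (gB)^T = -gB\}$ with the six listed basis elements. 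After that, the verification that each listed term is individually Killing — i.e. plugging each of the six vector fields back into $L_W g = 0$ — is a short direct check that can be left to the reader or done in one line.
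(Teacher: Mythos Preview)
Your proposal is correct and arrives at the same six generators, but the route differs from the paper's. The paper also begins from $L_W g=0$ and writes down the same diagonal equations (your $\partial_x W^x=0$, their (3.4)) and six off-diagonal equations (their (3.5)--(3.6)), but then proceeds by explicit PDE integration: it differentiates the off-diagonal relations, uses (3.4) to kill mixed second partials, and concludes that each $W^i$ is separately affine in each remaining variable, producing expressions with many free constants (including spurious quadratic and cubic cross-terms such as $a_1\varrho\eta\vartheta$); it then \emph{assumes} a particular subset of constants vanish and matches the survivors against (3.5)--(3.6) to obtain (3.2). Your approach instead invokes the standard fact that Killing fields of a flat metric are affine, isolates the linear part as $\mathfrak{o}(2,2)=\{B:(gB)^T=-gB\}$, and reads off the six basis elements from the signature pattern. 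What your approach buys is a clean, sign-robust classification that makes the elliptic-versus-hyperbolic dichotomy transparent and explains structurally why there are exactly six generators; what the paper's explicit integration buys is that it requires no prior knowledge of the affine structure of flat Killing fields, at the cost of a longer computation and an ad hoc pruning of constants (which is really where the translations and the spurious higher-order terms are discarded). Your remark about justifying the restriction to linear $W$ is well taken and is handled only implicitly in the paper.
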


\begin{proof}
Let $W$ be the vector which generate the isometries in $E_{2}^{4}$. We can
write as the following the general vector field; 
\begin{equation}
W(\xi ,\varrho ,\vartheta ,\eta )=W^{1}(\xi ,\varrho ,\vartheta ,\eta
)\partial \xi +W^{2}(\xi ,\varrho ,\vartheta ,\eta )\partial \varrho
+W^{3}(\xi ,\varrho ,\vartheta ,\eta )\partial \vartheta +W^{4}(\xi ,\varrho
,\vartheta ,\eta )\partial \eta ,  \tag{3.3}
\end{equation}%
where $W^{j}\ $are real functions $($for $j=1,2,3,4)$. Also, by using
definition 2 and definition 3, the expression of the (3.3) is%
\begin{equation}
W_{\xi }^{1}=W_{\varrho }^{2}=W_{\vartheta }^{3}=W_{\eta }^{4}=0,  \tag{3.4}
\end{equation}%
\begin{equation}
W_{\varrho }^{1}+W_{\xi }^{2}=0;W_{\vartheta }^{1}-W_{\xi }^{3}=0;W_{\eta
}^{1}-W_{\xi }^{4}=0  \tag{3.5}
\end{equation}%
\begin{equation}
W_{\vartheta }^{2}-W_{\varrho }^{3}=0,W_{\eta }^{2}-W_{\varrho
}^{4}=0,W_{\eta }^{3}+W_{\vartheta }^{4}=0,  \tag{3.6}
\end{equation}%
first, we will obtain the function $W^{1}$, then from (3.4) and (3.5) we
write 
\begin{equation}
W_{\varrho }^{1}+W_{\xi }^{2}=0.  \tag{3.7}
\end{equation}%
then differentiating with respect to $\varrho $ in\ the previous equation
(3.7), we have%
\begin{equation}
W_{\varrho \varrho }^{1}+W_{\xi \varrho }^{2}=0  \tag{3.8}
\end{equation}%
and then differentiating with respect to $\vartheta $ in the equations $%
W_{\vartheta }^{1}-W_{\xi }^{3}=0$, we obtain%
\begin{equation}
W_{\vartheta \vartheta }^{1}-W_{\xi \vartheta }^{3}=0,  \tag{3.9a}
\end{equation}%
and then differentiating with respect to $\eta $ in the equations $W_{\eta
}^{1}-W_{\xi }^{4}=0$, we obtain%
\begin{equation}
W_{\eta \eta }^{1}-W_{\xi \eta }^{4}=0  \tag{3.9b}
\end{equation}%
and from (3.4) we get $W_{\xi \varrho }^{2},W_{\xi \eta }^{4},W_{\xi
\vartheta }^{3}=0.$ From (3.8) and (3.9a), (3.9b) which gives $W_{\vartheta
\vartheta }^{1},W_{\eta \eta }^{1},W_{\varrho \varrho }^{1}=0.$ Therefore,
the function $W^{1}$ can be written as follows%
\begin{align}
W^{1}(\varrho ,\vartheta ,\eta )& =f_{1}^{1}(\varrho ,\eta )\vartheta
+g_{1}^{1}(\varrho ,\eta ),  \tag{3.10} \\
W^{1}(\varrho ,\vartheta ,\eta )& =f_{1}^{2}(\varrho ,\vartheta )\eta
+g_{1}^{2}(\varrho ,\vartheta ),  \tag{3.11} \\
W^{1}(\varrho ,\vartheta ,\eta )& =f_{1}^{3}(\eta ,\vartheta )\varrho
+g_{1}^{3}(\eta ,\vartheta ),  \notag
\end{align}%
where $f_{1}^{i},g_{1}^{i}\in C^{\infty },i=\{i=1,2,3\}.$ From (3.10) and
since $W_{\eta \eta }^{1}=0$ we get 
\begin{equation*}
W_{\eta \eta }^{1}(\varrho ,\vartheta ,\eta )=f_{1\eta \eta }^{1}(\varrho
,\eta )\vartheta +g_{1\eta \eta }^{1}(\varrho ,\eta )=0,
\end{equation*}%
this means $f_{1\eta \eta }^{1}(\varrho ,\eta )=g_{1\eta \eta }^{1}(\varrho
,\eta )=$ $0.$ Thus, we can write the equations $f_{1}^{1}(\varrho ,\eta )$
and $g_{1}^{1}(\varrho ,\eta )$ as follows 
\begin{align*}
f_{1}^{1}(\varrho ,\eta )& =h_{1}(\varrho )\eta +m_{1}(\varrho ), \\
g_{1}^{1}(\varrho ,\eta )& =h_{1}^{\ast }(\varrho )\eta +m_{2}(\varrho ).
\end{align*}

Furthermore, since $W_{\varrho \varrho }^{1}=0$ we can choose the functions $%
h_{1},h_{1}^{\ast },m_{1},m_{2}$ as 
\begin{align}
h_{1}(\varrho )& =a_{1}\varrho +b_{1},h_{1}^{\ast }(\varrho )=a_{1}^{\ast
}\varrho +b_{2},  \notag \\
m_{1}(\varrho )& =c_{1}\varrho +d_{1},m_{2}(\varrho )=c_{2}\varrho +d_{2}. 
\tag{3.12}
\end{align}

Furthermore, substituting this equation into (3.10), we have%
\begin{equation}
W^{1}(\varrho ,\vartheta ,\eta )=\left( \left( a_{1}\varrho +b_{1}\right)
\eta +c_{1}\varrho +d_{1}\right) \vartheta +\left( a_{1}^{\ast }\varrho
+b_{2}\right) \eta +c_{2}\varrho +d_{2}.  \tag{3.13}
\end{equation}

Similarly, by making the necessary algebraic operations, the following
component equations are obtained, respectively.%
\begin{eqnarray*}
W^{2}(\xi ,\vartheta ,\eta ) &=&\left( \left( a_{2}\vartheta +b\right) \eta
+x_{2}^{1}\vartheta +y_{2}^{1}\right) \xi +\left( a_{2}^{\ast }\vartheta
+b^{\ast }\right) \eta +x_{2}^{2}\vartheta +y_{2}^{2}; \\
W^{3}(\varrho ,\xi ,\eta ) &=&\left( \left( a_{3}\eta +d\right) \xi
+x_{3}^{1}\eta +y_{3}^{1}\right) \varrho +\left( a_{3}^{\ast }\eta +d^{\ast
}\right) \xi +x_{3}^{2}\eta +y_{3}^{2}; \\
W^{4}(\xi ,\vartheta ,\varrho ) &=&\left( \left( a_{4}\varrho +e\right)
\vartheta +x_{4}^{1}\varrho +y_{4}^{1}\right) \xi +\left( a_{4}^{\ast
}\varrho +e^{\ast }\right) \vartheta +x_{4}^{2}\varrho +y_{4}^{2},
\end{eqnarray*}
where $a_{i},a_{i}^{\ast },x_{i}^{j},y_{i}^{j},b,d,e,b^{\ast },d^{\ast
},e^{\ast }\in 
%TCIMACRO{\U{211d} }%
%BeginExpansion
\mathbb{R}
%EndExpansion
;i,j\in I.$

If we assume arbitrary constants as 
\begin{equation*}
a_{i}=x_{i}^{1}=c_{1}=b_{1}=b=d=e=a_{i}^{\ast }=y_{i}^{2}=d_{2}=0;i\in
\{1,2,3,4\}
\end{equation*}%
then we obtain%
\begin{align*}
W^{1}(\varrho ,\vartheta ,\eta )& =d_{1}\vartheta +b_{2}\eta +c_{2}\varrho
.;W^{2}(\xi ,\vartheta ,\eta )=y_{2}^{1}\xi +b^{\ast }\eta
+x_{2}^{2}\vartheta; \\
W^{3}(\varrho ,\xi ,\eta )& =y_{3}^{1}\varrho +d^{\ast }\xi +x_{3}^{2}\eta
;W^{4}(\xi ,\vartheta ,\varrho )=y_{4}^{1}\xi +e^{\ast }\vartheta
+x_{4}^{2}\varrho .
\end{align*}

Furthermore, by using the equations (3.4), (3.5) and (3.6), we write%
\begin{eqnarray*}
y_{2}^{1} &=&-c_{2}=f;d_{1}=d^{\ast }=c;b_{2}=y_{4}^{1}=a; \\
x_{2}^{2} &=&y_{3}^{1}=b;b^{\ast }=x_{4}^{2}=d;e^{\ast }=-x_{3}^{2}=e.
\end{eqnarray*}

Hence, the vector fields $W^{1},W^{2},W^{3},W^{4}$ are given by%
\begin{align}
W^{1}(\varrho ,\vartheta ,\eta )& =c\vartheta +a\eta -f\varrho .;W^{2}(\xi
,\vartheta ,\eta )=f\xi +d\eta +b\vartheta ;  \tag{3.14} \\
W^{3}(\varrho ,\xi ,\eta )& =b\varrho +c\xi -e\eta ;W^{4}(\xi ,\vartheta
,\varrho )=a\xi +e\vartheta +d\varrho .  \notag
\end{align}

By using the equation (3.14) into the equation (3.3), we have%
\begin{equation*}
W(\xi ,\varrho ,\vartheta ,\eta )=\left( c\vartheta +a\eta -f\varrho \right)
\partial \xi +\left( f\xi +d\eta +b\vartheta \right) \partial \varrho
+\left( b\varrho +c\xi -e\eta \right) \partial \vartheta
\end{equation*}%
\begin{equation*}
+\left( a\xi +e\vartheta +d\varrho \right) \partial \eta ;
\end{equation*}%
\begin{equation*}
W(\xi ,\varrho ,\vartheta ,\eta )=a\left( \eta \partial \xi +\xi \partial
\eta \right) +b\left( \vartheta \partial \varrho +\varrho \partial \vartheta
\right) +c\left( \vartheta \partial \xi +\xi \partial \vartheta \right)
\end{equation*}%
\begin{equation*}
+d(\eta \partial \varrho +\varrho \partial \eta )+e(\vartheta \partial \eta
-\eta \partial \vartheta )+f\left( \xi \partial \varrho -\varrho \partial
\xi \right) ,
\end{equation*}%
where $a,b,c,d,e,f\in 
%TCIMACRO{\U{211d} }%
%BeginExpansion
\mathbb{R}
%EndExpansion
_{0}^{+}.$
\end{proof}

\begin{theorem}
Let $W(\xi ,\varrho ,\vartheta ,\eta )$ be the killing vector field and let $%
\gamma=(f_{1},f_{2},f_{3},f_{4})$ be a curve in $E_{2}^{4}$, then the
surfaces of rotation are given as follows

\begin{enumerate}
\item For the rotations $\Omega _{1}=\vartheta \partial \xi +\xi \partial
\vartheta $ and $\Omega _{4}=\eta \partial \varrho +\varrho \partial \eta ,$
the hyperbolic surface of rotation is given as%
\begin{equation*}
S_{14}(x,\alpha ,s)=\left( 
\begin{array}{c}
f_{1}\cosh x+f_{3}\sinh x,f_{2}\cosh \alpha +f_{4}\sinh \alpha , \\ 
f_{1}\sinh x+f_{3}\cosh x,f_{2}\sinh \alpha +f_{4}\cosh \alpha 
\end{array}%
\right) 
\end{equation*}%
and for the curve $\gamma (s)=(f_{1}(s),0,0,f_{4}(s))$ the Gaussian
curvature $K$ and the mean curvature vector $H$ of the rotational surface $%
S_{14}(x(t),\alpha (t),s)=\left( f_{1}\cosh x,f_{4}\sinh \alpha ,f_{1}\sinh
x,f_{4}\cosh \alpha \right) $ are given as%
\begin{equation*}
K=\frac{\left( f_{1}^{\prime }f_{4}^{{}}-f_{1}f_{4}^{\prime }\right)
^{2}\left( \overset{.}{x}\overset{.}{\alpha }\right) ^{2}}{f_{4}^{2}\overset{%
.}{\alpha }^{2}-f_{1}^{2}\overset{.}{x}^{2}}+\frac{\left( f_{1}^{\prime
}f_{4}\overset{.}{\alpha }^{2}-f_{4}^{\prime }f_{1}\overset{.}{x}^{2}\right)
\left( f_{1}^{\prime }f_{4}^{\prime \prime }-f_{1}^{\prime \prime
}f_{4}^{\prime }\right) }{-f_{1}^{\prime 2}+f_{4}^{\prime 2}},
\end{equation*}%
\begin{equation*}
H=\{\frac{f_{1}f_{4}\left( \overset{..}{x}\overset{.}{\alpha }+\overset{.}{x}%
\overset{..}{\alpha }\right) }{2\sqrt{f_{4}^{2}\overset{.}{\alpha }%
^{2}-f_{1}^{2}\overset{.}{x}^{2}}}+\frac{f_{4}^{\prime }f_{1}\overset{.}{x}%
^{2}-f_{1}^{\prime }f_{4}\overset{.}{\alpha }^{2}}{2\sqrt{-f_{1}^{\prime
2}+f_{4}^{\prime 2}}}\}e_{3}+\frac{\left( f_{1}^{\prime }f_{4}^{\prime
\prime }-f_{1}^{\prime \prime }f_{4}^{\prime }\right) }{2\sqrt{%
-f_{1}^{\prime 2}+f_{4}^{\prime 2}}}e_{4}
\end{equation*}%
where 

$e_{3}=\frac{\left( f_{4}\overset{.}{\alpha }\sinh x,f_{1}\overset{.}{x%
}\cosh \alpha ,f_{4}\overset{.}{\alpha }\cosh x,f_{1}\overset{.}{x}\sinh
\alpha \right) }{\sqrt{f_{4}^{2}\overset{.}{\alpha }^{2}-f_{1}^{2}\overset{.}%
{x}^{2}}},e_{4}=\frac{\left( f_{4}^{\prime }\cosh x,f_{1}^{\prime }\sinh
\alpha ,f_{4}^{\prime }\sinh x,f_{1}^{\prime }\cosh \alpha \right) }{\sqrt{%
-f_{1}^{\prime 2}+f_{4}^{\prime 2}}}.$

\item For the rotations $\Omega _{2}=\eta \partial \xi +\xi \partial \eta $
and $\Omega _{3}=\vartheta \partial \varrho +\varrho \partial \vartheta ,$
the hyperbolic surface of rotation is given as 
\begin{equation*}
S_{23}(y,z,s)=\left( 
\begin{array}{c}
f_{1}\cosh y+f_{4}\sinh y,f_{2}\cosh z+f_{3}\sinh z, \\ 
f_{2}\sinh z+f_{3}\cosh z,f_{1}\sinh y+f_{4}\cosh y%
\end{array}%
\right) .
\end{equation*}%
and for the curve $\gamma (s)=(f_{1}(s),f_{2}(s),0,0)$ the Gaussian
curvature $K$ and the mean curvature vector $H$ of the rotational surface $%
S_{23}(y(t),z(t),s)=\left( f_{1}\cosh y,f_{2}\cosh z,f_{2}\sinh z,f_{1}\sinh
y\right) $ are given as%
\begin{equation*}
K=-\left( 
\begin{array}{c}
\frac{\left( f_{1}f_{2}^{\prime }+f_{1}^{\prime }f_{2}^{{}}\right)
^{2}\left( \overset{.}{y}\overset{.}{z}\right) ^{2}}{f_{2}^{2}\overset{.}{z}%
+f_{1}^{2}\overset{.}{y}}+ \\ 
\frac{\left( f_{1}^{{}}f_{2}^{\prime }\overset{.}{y}^{2}+f_{1}^{\prime }f_{2}%
\overset{.}{z}^{2}\right) \left( f_{1}^{\prime \prime }f_{2}^{\prime
}+f_{1}^{\prime }f_{2}^{\prime \prime }\right) }{f_{1}^{\prime
2}+f_{2}^{\prime 2}}%
\end{array}%
\right) ;H=\left( 
\begin{array}{c}
\frac{f_{1}^{{}}f_{2}(\overset{.}{y}\overset{..}{z}+\overset{..}{y}\overset{.%
}{z})}{2\sqrt{f_{2}^{2}\overset{.}{z}+f_{1}^{2}\overset{.}{y}}}e_{3} \\ 
+\frac{f_{1}^{{}}f_{2}^{\prime }\overset{.}{y}^{2}+f_{1}^{\prime }f_{2}%
\overset{.}{z}^{2}-f_{1}^{\prime \prime }f_{2}^{\prime }-f_{1}^{\prime
}f_{2}^{\prime \prime }}{2\sqrt{f_{1}^{\prime 2}+f_{2}^{\prime 2}}}e_{4}%
\end{array}%
\right) ,
\end{equation*}%
where 

$e_{3}=\frac{\left( f_{2}\overset{.}{z}\sinh y,f_{1}\overset{.}{y}%
\sinh z,f_{1}\overset{.}{y}\cosh z,f_{2}\overset{.}{z}\cosh y\right) }{\sqrt{%
f_{2}^{2}\overset{.}{z}+f_{1}^{2}\overset{.}{y}}}$; $e_{4}=\frac{\left(
f_{2}^{\prime }\cosh y,f_{1}^{\prime }\cosh z,f_{1}^{\prime }\sinh
z,f_{2}^{\prime }\sinh y\right) }{\sqrt{f_{1}^{\prime 2}+f_{2}^{\prime 2}}}$

\item For the rotations $\Omega _{5}=\xi \partial \varrho -\varrho \partial
\xi $ and $\Omega _{6}=\vartheta \partial \eta -\eta \partial \vartheta ,$
the elliptic surface of rotation is given as 
\begin{equation*}
S_{56}(\beta ,\theta ,s)=\left( 
\begin{array}{c}
f_{1}\cos \beta +f_{2}\sin \beta ,-f_{1}\sin \beta +f_{2}\cos \beta , \\ 
f_{3}\cos \theta +f_{4}\sin \theta ,-f_{3}\sin \theta +f_{4}\cos \theta 
\end{array}%
\right) ,
\end{equation*}%
and for the curve $\gamma (s)=(0,f_{2}(s),0,f_{4}(s))$ the Gaussian
curvature $K$ and the mean curvature vector $H$ of the rotational surface $%
S_{56}(\beta \left( t\right) ,\theta \left( t\right) ,s)=\left( f_{2}\sin
\beta ,f_{2}\cos \beta ,f_{4}\sin \theta ,f_{4}\cos \theta \right) $ are
given as 
\begin{equation*}
K=-\left( 
\begin{array}{c}
\frac{\left( f_{2}^{\prime }f_{4}-f_{2}f_{4}^{\prime }\right) ^{2}\left( 
\overset{.}{\beta }\overset{.}{\theta }\right) ^{2}}{-f_{2}^{2}\overset{.}{%
\beta }^{2}+f_{4}^{2}\overset{.}{\theta }}+ \\ 
\frac{\left( -f_{2}^{\prime \prime }f_{4}^{\prime }+f_{2}^{\prime
}f_{4}^{\prime \prime }\right) (f_{4}^{\prime }f_{2}\overset{.}{\beta }%
^{2}-f_{2}^{\prime }f_{4}\overset{.}{\theta }^{2})^{2}}{-f_{2}^{\prime
2}+f_{4}^{\prime 2}}%
\end{array}%
\right) ;
\end{equation*}
\begin{equation*}
H=\frac{f_{4}f_{2}\left( \overset{.}{\beta }\overset{..}{\theta }-\overset{.}%
{\theta }\overset{..}{\beta }\right) }{2\sqrt{f_{4}^{2}\overset{.}{\theta }%
-f_{2}^{2}\overset{.}{\beta }^{2}}}e_{3}+\frac{\left( f_{4}^{\prime }f_{2}%
\overset{.}{\beta }^{2}-f_{2}^{\prime }f_{4}\overset{.}{\theta }%
^{2}+f_{2}^{\prime \prime }f_{4}^{\prime }-f_{2}^{\prime }f_{4}^{\prime
\prime }\right) }{2\sqrt{f_{4}^{\prime 2}-f_{2}^{\prime 2}}}e_{4}
\end{equation*}
where
$e_{3}=\frac{\left( -f_{4}\overset{.}{\theta }\cos \beta ,f_{4}\overset%
{.}{\theta }\sin \beta ,-f_{2}\overset{.}{\beta }\cos \theta ,f_{2}\overset{.%
}{\beta }\sin \theta \right) }{\sqrt{-f_{2}^{2}\overset{.}{\beta }%
^{2}+f_{4}^{2}\overset{.}{\theta }}},e_{4}=\frac{\left( f_{4}^{\prime }\sin
\beta ,f_{4}^{\prime }\cos \beta ,f_{2}^{\prime }\sin \theta ,f_{2}^{\prime
}\cos \theta \right) }{\sqrt{-f_{2}^{\prime 2}+f_{4}^{\prime 2}}}$; $ -\infty
<x,y,z,\alpha ,\beta ,\theta <\infty ,s\in I$ and $f_{i}\in C^{\infty }.$
\end{enumerate}
\end{theorem}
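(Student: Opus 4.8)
The plan is to realize each surface as the orbit of the profile curve $\gamma$ under a commuting pair of one-parameter isometry groups, and then to run the standard \emph{first fundamental form} / \emph{normal frame} / \emph{second fundamental form} computation, closing with the Gauss equation for $K$ and formula (2.12) for $H$. First I would integrate the elementary generators. Each $\Omega_i$ appearing in the statement is one of the six building blocks of the Killing field $W$ obtained above, and it spans a one-parameter subgroup of isometries whose flow solves the linear system $\dot{\mathbf x}=\Omega_i(\mathbf x)$. When $\Omega_i$ couples a timelike coordinate with a spacelike one --- the pairs $(\xi,\vartheta),(\varrho,\eta)$ for $\Omega_1,\Omega_4$ and $(\xi,\eta),(\varrho,\vartheta)$ for $\Omega_2,\Omega_3$ --- its flow is a hyperbolic rotation (Lorentz boost) in that coordinate plane, carrying $\cosh$ on the diagonal and $\sinh$ off it; when it couples two coordinates of the same causal character --- the pairs $(\xi,\varrho),(\vartheta,\eta)$ for $\Omega_5,\Omega_6$ --- its flow is an ordinary elliptic rotation in that plane. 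In each of the three cases the two selected generators act on disjoint coordinate pairs, hence commute, so the composite of their flows is again a two-parameter group of isometries; applying it to $\gamma(s)=(f_1,f_2,f_3,f_4)$ reproduces exactly the families $S_{jk}(\cdot,\cdot,s)$ displayed. Restricting the two flow parameters along a curve $t\mapsto(\cdot(t),\cdot(t))$ makes the orbit two-dimensional, and inserting the special profiles $(f_1,0,0,f_4)$, $(f_1,f_2,0,0)$, $(0,f_2,0,f_4)$ yields the parametrizations $\Psi(t,s)$ for which $K$ and $H$ are claimed.

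Next I would compute the induced metric of $\Psi(t,s)$ from (2.1). Differentiating and using only $\cosh^2-\sinh^2=1$ and $\cos^2+\sin^2=1$, the off-diagonal coefficient $\langle\Psi_t,\Psi_s\rangle$ drops out and $\langle\Psi_t,\Psi_t\rangle,\langle\Psi_s,\Psi_s\rangle$ collapse to the expressions under the radicals in the statement --- for instance, in the first case $\langle\Psi_t,\Psi_t\rangle=f_1^2\dot x^2-f_4^2\dot\alpha^2$ and $\langle\Psi_s,\Psi_s\rangle=-f_1'^2+f_4'^2$ --- so the coordinate tangent frame is orthogonal. I would then check directly that the displayed $e_3$ and $e_4$ are orthogonal to $\Psi_t,\Psi_s$ and to one another and evaluate $\varepsilon_3=\langle e_3,e_3\rangle$ and $\varepsilon_4=\langle e_4,e_4\rangle$; in the two hyperbolic cases one of these equals $+1$ and the other $-1$ (the normal bundle is Lorentzian), and these signs enter every term below. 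This produces an adapted orthonormal frame $\{\Psi_t/\sqrt{|g_{tt}|},\,\Psi_s/\sqrt{|g_{ss}|},\,e_3,\,e_4\}$ along the surface.

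Then I would assemble the second fundamental form and the two curvatures. Since $E_2^4$ is flat, $\overline{\nabla}_{\partial_i}\partial_j=\Psi_{ij}$, so $h(\partial_i,\partial_j)=\varepsilon_3\langle\Psi_{ij},e_3\rangle e_3+\varepsilon_4\langle\Psi_{ij},e_4\rangle e_4$; computing the Hessian vectors $\Psi_{ss},\Psi_{st},\Psi_{tt}$ and pairing them with $e_3,e_4$ --- where the same two identities again annihilate most terms, e.g. $\langle\Psi_{ss},e_3\rangle=0$ in the first case --- gives the coefficients $h_{ij}^{3},h_{ij}^{4}$. The mean curvature vector then follows from (2.12) applied to this adapted frame (equivalently, from the connection forms of the adapted coframe via (2.10)--(2.12)), and the Gaussian curvature from the Gauss equation of the flat ambient space, $K=\dfrac{\langle h(\partial_t,\partial_t),h(\partial_s,\partial_s)\rangle-\langle h(\partial_t,\partial_s),h(\partial_t,\partial_s)\rangle}{g_{tt}g_{ss}-g_{ts}^{\,2}}$; substituting the $h_{ij}^{3},h_{ij}^{4}$ together with $\varepsilon_3,\varepsilon_4$ and simplifying returns the stated formulas. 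Cases 2 and 3 run identically --- case 2 is another pair of boosts with a different coordinate pairing, while case 3 replaces $\cosh,\sinh$ by $\cos,\sin$ and flips the relevant signs $\varepsilon_3,\varepsilon_4$ --- so I would carry out case 1 in full and merely record the sign changes for the other two.

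The main obstacle is not any single computation but the sign and causal-character bookkeeping that index $2$ forces: the numbers $\varepsilon_3,\varepsilon_4\in\{+1,-1\}$ must be pinned down correctly, since they control every sign in $K$ and $H$, and one must simultaneously stay in the non-degenerate range --- excluding the null reparametrizations such as $f_1\dot x=\pm f_4\dot\alpha$ or $f_1'=\pm f_4'$ --- so that the adapted frame exists and the quotient defining $K$ is meaningful. Once those points are settled the remaining work is purely mechanical, every cancellation being driven by the two Pythagorean identities above.
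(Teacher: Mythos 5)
Your proposal is correct and follows essentially the same route as the paper: exponentiate the generators $\Omega_i$ to one-parameter (boost/rotation) groups, compose the commuting pairs acting on disjoint coordinate planes into two-parameter isometry groups applied to $\gamma$, then compute the adapted frame $e_1,\dots,e_4$ with the signs $\varepsilon_A$, the coefficients $h_{ij}^{3},h_{ij}^{4}$, and finally $H$ via (2.12) and $K$ from the second fundamental form. Your use of the Gauss equation for $K$ is equivalent to the paper's formula $K=\sum_{s}\varepsilon_{s}\det\left[ h_{ij}^{s}\right]$, and your disjoint-coordinate-pair commutativity observation replaces the paper's explicit Poisson-bracket verification that $\{\Omega_1,\Omega_4\},\{\Omega_2,\Omega_3\},\{\Omega_5,\Omega_6\}$ are the closed abelian subalgebras.
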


\begin{proof}
Let $W(\xi ,\varrho ,\vartheta ,\eta )=a\Omega _{2}+b\Omega _{3}+c\Omega
_{1}+d\Omega _{4}+e$ $\Omega _{6}+f\Omega _{5}$ be the killing vector field.
Hence, we can give vector fields generating the rotations as follows 
\begin{equation}
\Omega _{1}=\vartheta \partial \xi +\xi \partial \vartheta ;\text{ }\Omega
_{2}=\eta \partial \xi +\xi \partial \eta ;\text{ }\Omega _{3}=\vartheta
\partial \varrho +\varrho \partial \vartheta ;  \tag{3.15a}
\end{equation}%
\begin{equation}
\Omega _{4}=\eta \partial \varrho +\varrho \partial \eta ;\text{ }\Omega
_{5}=\xi \partial \varrho -\varrho \partial \xi ;\text{ }\Omega
_{6}=\vartheta \partial \eta -\eta \partial \vartheta ,  \tag{3.15b}
\end{equation}%
by using the equations (3.15), we will find $4\times 4$ matrices of
hyperbolic and elliptic by rotating $\Omega _{i}$, $i\in I$.

a) Hyperbolic matrices: we give some one-parameter hyperbolic matrices
groups of rotation $\Omega _{i},i=1,2,3,4.$

$1)$ For $\Omega _{1}=\vartheta \partial \xi +\xi \partial \vartheta ,$ we
write the vector field%
\begin{equation}
\Lambda _{\Omega _{1}}=%
\begin{bmatrix}
\vartheta & 0 & \xi & 0%
\end{bmatrix}%
^{\intercal },  \tag{3.16}
\end{equation}%
then, the previous equation can be given as follows%
\begin{equation}
\Delta _{\Lambda _{\Omega _{1}}}=%
\begin{bmatrix}
0 & 0 & 1 & 0 \\ 
0 & 0 & 0 & 0 \\ 
1 & 0 & 0 & 0 \\ 
0 & 0 & 0 & 0%
\end{bmatrix}%
,  \tag{3.17}
\end{equation}%
from definition 1, by using the differential equation $\frac{d}{dw}\psi
_{w}(x)=W(x)$ we have 
\begin{equation*}
\Pi _{w}(x)=e^{\Delta _{\Lambda _{1}}x}(x)=I_{4\times 4}+\Delta _{\Lambda
_{\Omega _{1}}}x+\frac{\left( \Delta _{\Lambda _{\Omega _{1}}}x\right) ^{2}}{%
2!}+...
\end{equation*}%
\begin{equation}
\Pi _{\Omega _{1}}(x)=%
\begin{bmatrix}
\cosh x & 0 & \sinh x & 0 \\ 
0 & 1 & 0 & 0 \\ 
\sinh x & 0 & \cosh x & 0 \\ 
0 & 0 & 0 & 1%
\end{bmatrix}%
.  \tag{3.18}
\end{equation}%
$2)$ For $\Omega _{2}=\eta \partial \xi +\xi \partial \eta ,$ we write the
vector field%
\begin{equation}
\Lambda _{\Omega _{2}}=%
\begin{bmatrix}
\eta & 0 & 0 & \xi%
\end{bmatrix}%
^{\intercal },  \tag{3.19}
\end{equation}%
then, the previous equation can be given as follows 
\begin{equation}
\Delta _{\Lambda _{\Omega _{2}}}=%
\begin{bmatrix}
0 & 0 & 0 & 1 \\ 
0 & 0 & 0 & 0 \\ 
0 & 0 & 0 & 0 \\ 
1 & 0 & 0 & 0%
\end{bmatrix}%
,  \tag{3.20}
\end{equation}%
from definition 1, by using the differential equation $\frac{d}{du}\psi
_{u}(y)=W(y),$ we have 
\begin{equation*}
\Pi _{u}(y)=e^{\Delta _{\Lambda _{\Omega _{2}}}u}(y)=I_{4\times 4}+\Delta
_{\Lambda _{\Omega _{2}}}y+\frac{\left( \Delta _{\Lambda _{\Omega
_{2}}}y\right) ^{2}}{2!}+...
\end{equation*}%
\begin{equation}
\Pi _{\Omega _{2}}(y)=%
\begin{bmatrix}
\cosh y & 0 & 0 & \sinh y \\ 
0 & 1 & 0 & 0 \\ 
0 & 0 & 1 & 0 \\ 
\sinh y & 0 & 0 & \cosh y%
\end{bmatrix}%
.  \tag{3.21}
\end{equation}

3) For $\Omega _{3}=\vartheta \partial \varrho +\varrho \partial \vartheta ,$
we write the vector field given as 
\begin{equation}
\Lambda _{\Omega _{3}}=%
\begin{bmatrix}
0 & \vartheta & \varrho & 0%
\end{bmatrix}%
^{\intercal },  \tag{3.22}
\end{equation}%
then, the previous equation can be given as follows 
\begin{equation*}
\Delta _{\Lambda _{\Omega _{3}}}=%
\begin{bmatrix}
0 & 0 & 0 & 0 \\ 
0 & 0 & 1 & 0 \\ 
0 & 1 & 0 & 0 \\ 
0 & 0 & 0 & 0%
\end{bmatrix}%
.
\end{equation*}

Now, from definition 1 we can say that the one-parameter group of
homomorphism $\psi _{z}(\xi ,\varrho ,\vartheta ,\varsigma )$ is expressed
by $\psi _{z}^{\prime }(\xi )=\psi ^{\xi }\psi _{z}(\xi )$. So, we find $%
\psi _{z}(\xi )=e^{v\psi _{z}}\xi $ and calculating the matrix exponential,
we have 
\begin{equation*}
\Delta _{v}(z)=e^{\Delta _{\Lambda _{\Omega _{3}}}z}(z)=I_{4\times 4}+\Delta
_{\Lambda _{\Omega _{3}}}z+\frac{\left( \Delta _{\Lambda _{\Omega
_{3}}}z\right) ^{2}}{2!}+...
\end{equation*}%
\begin{equation}
\Pi _{\Omega _{3}}(z)=%
\begin{bmatrix}
1 & 0 & 0 & 0 \\ 
0 & \cosh z & \sinh z & 0 \\ 
0 & \sinh z & \cosh z & 0 \\ 
0 & 0 & 0 & 1%
\end{bmatrix}%
.  \tag{3.23}
\end{equation}

Similarly for $\Omega _{4}=\eta \partial \varrho +\varrho \partial \eta $,
we get 
\begin{equation*}
\Pi _{\Omega _{4}}(\alpha )=%
\begin{bmatrix}
1 & 0 & 0 & 0 \\ 
0 & \cosh \alpha & 0 & \sinh \alpha \\ 
0 & 0 & 1 & 0 \\ 
0 & \sinh \alpha & 0 & \cosh \alpha%
\end{bmatrix}%
,
\end{equation*}%
and for $\Omega _{5}=\xi \partial \varrho -\varrho \partial \xi $ and $%
\Omega _{6}=\vartheta \partial \eta -\eta \partial \vartheta $, we obtain
two one-parameter matrix group of rotation.

b) Elliptic matrices: we give some one-parameter elliptic matrices groups of
rotation $\Omega _{5}$ and $\Omega _{6}$%
\begin{equation*}
\Pi _{\Omega _{5}}(\beta )=%
\begin{bmatrix}
\cos \beta & \sin \beta & 0 & 0 \\ 
-\sin \beta & \cos \beta & 0 & 0 \\ 
0 & 0 & 1 & 0 \\ 
0 & 0 & 0 & 1%
\end{bmatrix}%
;\Pi _{\Omega _{6}}(\theta )=%
\begin{bmatrix}
1 & 0 & 0 & 0 \\ 
0 & 1 & 0 & 0 \\ 
0 & 0 & \cos \theta & \sin \theta \\ 
0 & 0 & -\sin \theta & \cos \theta%
\end{bmatrix}%
.
\end{equation*}

Now if we want to express surfaces of rotation generated by two hyperbolic
and elliptic subgroups, the sub-algebra of the lie algebra of the Lorentz
group can be obtained, then we can write the closed subgroups of Lorentz
group. Hence, two parameter subgroups of $SO(4,2)$ are obtain, and two
parameter subgroups that fix some axis of rotation can be expressed.
Therefore, we can write 2D sub-algebras, and therefore we need to obtain two
vectors. In this context, by using Poisson bracket of two vectors $X=\overset%
{n}{\underset{i=1}{\sum }}X^{i}\partial _{i},Y=\overset{n}{\underset{i=1}{%
\sum }}Y^{i}\partial _{i}$ defined by 
\begin{equation*}
\left[ X,Y\right] =\overset{n}{\underset{i=1}{\sum }}\overset{n}{\underset{%
j=1}{\sum }}(X^{j}\partial _{j}Y^{i}-Y^{j}\partial _{j}X^{i})\partial _{i},
\end{equation*}%
we can write the following expressions%
\begin{eqnarray*}
\left[ \Omega _{1},\Omega _{2}\right]  &=&\Omega _{6};\left[ \Omega
_{1},\Omega _{3}\right] =\Omega _{5};\left[ \Omega _{1},\Omega _{5}\right]
=\Omega _{3};\left[ \Omega _{1},\Omega _{6}\right] =\Omega _{2}; \\
\left[ \Omega _{2},\Omega _{4}\right]  &=&\Omega _{5};\left[ \Omega
_{2},\Omega _{5}\right] =\Omega _{4};\left[ \Omega _{6},\Omega _{2}\right]
=\Omega _{1};\left[ \Omega _{3},\Omega _{4}\right] =\Omega _{6}; \\
\left[ \Omega _{5},\Omega _{3}\right]  &=&\Omega _{1};\left[ \Omega
_{3},\Omega _{6}\right] =\Omega _{4};\left[ \Omega _{5},\Omega _{4}\right]
=\Omega _{2};\left[ \Omega _{6},\Omega _{4}\right] =\Omega _{3}
\end{eqnarray*}%
then these Poisson brackets are not in $Sp\{\Omega _{i},\Omega _{j}\}$
excluding $Sp\{\Omega _{1},\Omega _{4}\},Sp\{\Omega _{2},\Omega _{3}\}$ and $%
Sp\{\Omega _{5},\Omega _{6}\}.$ Therefore, these are not closed sub-algebra.
Also, 
\begin{equation*}
\left[ \Omega _{1},\Omega _{4}\right] =\left[ \Omega _{2},\Omega _{3}\right]
=\left[ \Omega _{5},\Omega _{6}\right] =0,
\end{equation*}%
$\{\Omega _{1},\Omega _{4}\},\left\{ \Omega _{2},\Omega _{3}\right\}
,\left\{ \Omega _{5},\Omega _{6}\right\} $ are the closed sub-algebra and we
can think $\{\Omega _{1},\Omega _{4}\},$ $\left\{ \Omega _{2},\Omega
_{3}\right\} ,$ $\left\{ \Omega _{5},\Omega _{6}\right\} $ as basis. Thus,
abelian subgroups of $SO(2,2)$ can be expressed. Then, $\Omega _{1},\Omega
_{4}$ and $\Omega _{2},\Omega _{3}$ generate abelian sub-algebras being
hyperbolic. Therefore, we can write matrices $\Pi _{\Omega _{1}}(x)\Pi
_{\Omega _{4}}(\alpha )$ and $\Pi _{\Omega _{2}}(y)\Pi _{\Omega _{3}}(z)$
being the rotational groups of matrices. Hence, these subgroups don't fix
any axis and so it is not a rotation about any axis. First, for the
rotations $\Omega _{1}$ and $\Omega _{4},$ the matrices of rotations of this
surface can be written as $\Pi _{\Omega _{1}}(x)\Pi _{\Omega _{4}}(\alpha )$%
. We are interested in taking a planar curve $\gamma $ with $s$ parameter as
follows 
\begin{equation}
\gamma (s)=(f_{1}(s),f_{2}(s),f_{3}(s),f_{4}(s)),s\in I  \tag{3.24}
\end{equation}%
and rotating it with 2D subgroup of isometry. Hence, the surface of
revolution $S_{14}$ around $\Pi _{\Omega _{1}}(x)$ and $\Pi _{\Omega
_{4}}(\alpha )$ can be parametrized as follows%
\begin{equation}
S_{14}(x,\alpha ,s)=\Pi _{\Omega _{1}}(x).\Pi _{\Omega _{4}}(\alpha ).%
\begin{bmatrix}
f_{1}(s) \\ 
f_{2}(s) \\ 
f_{3}(s) \\ 
f_{4}(s)%
\end{bmatrix}%
=\left( 
\begin{array}{c}
f_{1}\cosh x+f_{3}\sinh x, \\ 
f_{2}\cosh \alpha +f_{4}\sinh \alpha , \\ 
f_{1}\sinh x+f_{3}\cosh x, \\ 
f_{2}\sinh \alpha +f_{4}\cosh \alpha 
\end{array}%
\right) ,  \tag{3.25a}
\end{equation}%
where for $i\in \{1,2,3,4\},$ $f_{i}$ are smooth functions and $-\infty
<x,\alpha <\infty ,s\in I.$ Now, we consider the following rotational surface%
\begin{equation}
S^{_{^{14}}}(x(t),\alpha (t),s)=\left( f_{1}\cosh x(t),f_{4}\sinh \alpha
(t),f_{1}\sinh x(t),f_{4}\cosh \alpha (t)\right) ,  \tag{3.25b}
\end{equation}%
where $f_{1}$ and $f_{4}$ are nonzero smooth functions and the curve $\gamma
(s)=(f_{1}(s),0,f_{4}(s))$ lies on the $\xi \eta -$plane. For the rotational
surface (3.25b) we have the parametrizations 
\begin{eqnarray*}
S_{s}^{_{^{14}}}(x,\alpha ,s) &=&\left( f_{1}^{\prime }\cosh x,f_{4}^{\prime
}\sinh \alpha ,f_{1}^{\prime }\sinh x,f_{4}^{\prime }\cosh \alpha \right) ;
\\
S_{ss}^{_{^{14}}}(x,\alpha ,s) &=&\left( f_{1}^{\prime \prime }\cosh
x,f_{4}^{\prime \prime }\sinh \alpha ,f_{1}^{\prime \prime }\sinh
x,f_{4}^{\prime \prime }\cosh \alpha \right) ; \\
S_{t}^{_{^{14}}}(x,\alpha ,s) &=&\left( f_{1}\overset{.}{x}\sinh x,f_{4}%
\overset{.}{\alpha }\cosh \alpha ,f_{1}\overset{.}{x}\cosh x,f_{4}\overset{.}%
{\alpha }\sinh \alpha \right) ; \\
S_{tt}^{_{^{14}}}(x,\alpha ,s) &=&\left( 
\begin{array}{c}
f_{1}(\overset{..}{x}\sinh x+\overset{.}{x}^{2}\cosh x),f_{4}(\overset{..}{%
\alpha }\cosh \alpha +\overset{.}{\alpha }^{2}\sinh \alpha ), \\ 
f_{1}(\overset{..}{x}\cosh x+\overset{.}{x}^{2}\sinh x),f_{4}(\overset{..}{%
\alpha }\sinh \alpha +\overset{.}{\alpha }^{2}\cosh \alpha )%
\end{array}%
\right)  \\
S_{ts}^{_{^{14}}}(x,\alpha ,s) &=&\left( f_{1}^{\prime }\overset{.}{x}\sinh
x(t),f_{4}^{\prime }\overset{.}{\alpha }\cosh \alpha (t),f_{1}^{\prime }%
\overset{.}{x}\cosh x(t),f_{4}^{\prime }\overset{.}{\alpha }\sinh \alpha
\right) 
\end{eqnarray*}%
and 
\begin{equation*}
\left\langle S_{s}^{_{^{14}}},S_{s}^{_{^{14}}}\right\rangle =-f_{1}^{\prime
2}+f_{4}^{\prime 2}>0;\left\langle
S_{t}^{_{^{14}}},S_{t}^{_{^{14}}}\right\rangle =f_{1}^{2}\overset{.}{x}%
^{2}-f_{4}^{2}\overset{.}{\alpha }^{2}<0.
\end{equation*}%
Therefore, we choose the following moving frame $e_{1},e_{2},e_{3},e_{4}$,
such that $e_{1},e_{2}$ are tangent to $S^{_{^{14}}}$ and $e_{3},e_{4}$ are
normal to $S^{_{^{14}}}.$ Also, we write as 
\begin{eqnarray*}
e_{1} &=&\frac{%
\begin{pmatrix}
f_{1}\overset{.}{x}\sinh x, \\ 
f_{4}\overset{.}{\alpha }\cosh \alpha , \\ 
f_{1}\overset{.}{x}\cosh x, \\ 
f_{4}\overset{.}{\alpha }\sinh \alpha 
\end{pmatrix}%
}{\sqrt{f_{4}^{2}\overset{.}{\alpha }^{2}-f_{1}^{2}\overset{.}{x}^{2}}}%
;e_{2}=\frac{%
\begin{pmatrix}
f_{1}^{\prime }\cosh x, \\ 
f_{4}^{\prime }\sinh \alpha , \\ 
f_{1}^{\prime }\sinh x, \\ 
f_{4}^{\prime }\cosh \alpha 
\end{pmatrix}%
}{\sqrt{-f_{1}^{\prime 2}+f_{4}^{\prime 2}}};\frac{%
\begin{pmatrix}
f_{4}\overset{.}{\alpha }\sinh x, \\ 
f_{1}\overset{.}{x}\cosh \alpha , \\ 
f_{4}\overset{.}{\alpha }\cosh x, \\ 
f_{1}\overset{.}{x}\sinh \alpha 
\end{pmatrix}%
}{\sqrt{f_{4}^{2}\overset{.}{\alpha }^{2}-f_{1}^{2}\overset{.}{x}^{2}}} \\
e_{4} &=&\frac{\left( f_{4}^{\prime }\cosh x,f_{1}^{\prime }\sinh \alpha
,f_{4}^{\prime }\sinh x,f_{1}^{\prime }\cosh \alpha \right) }{\sqrt{%
-f_{1}^{\prime 2}+f_{4}^{\prime 2}}}.
\end{eqnarray*}%
Then, we can easily get%
\begin{equation*}
\varepsilon _{1}=\left\langle e_{1},e_{1}\right\rangle =-1;\varepsilon
_{2}=\left\langle e_{2},e_{2}\right\rangle =1;\varepsilon _{3}=\left\langle
e_{3},e_{3}\right\rangle =-1;\varepsilon _{4}=\left\langle
e_{4},e_{4}\right\rangle =1.
\end{equation*}

By using (2.9), (2.10), (2.11), (2.12), we obtain the following coefficients
of the second fundamental form $h$ and the connection forms 
\begin{eqnarray*}
h_{11}^{3} &=&\frac{f_{1}f_{4}\left( \overset{..}{x}\overset{.}{\alpha }+%
\overset{.}{x}\overset{..}{\alpha }\right) }{\sqrt{f_{4}^{2}\overset{.}{%
\alpha }^{2}-f_{1}^{2}\overset{.}{x}^{2}}};h_{12}^{3}=\frac{\left(
f_{1}^{\prime }f_{4}^{{}}-f_{1}f_{4}^{\prime }\right) \overset{.}{x}\overset{%
.}{\alpha }}{\sqrt{f_{4}^{2}\overset{.}{\alpha }^{2}-f_{1}^{2}\overset{.}{x}%
^{2}}};h_{22}^{3}=0; \\
h_{11}^{4} &=&\frac{f_{1}^{\prime }f_{4}\overset{.}{\alpha }%
^{2}-f_{4}^{\prime }f_{1}\overset{.}{x}^{2}}{\sqrt{-f_{1}^{\prime
2}+f_{4}^{\prime 2}}};h_{22}^{4}=\frac{\left( f_{1}^{\prime }f_{4}^{\prime
\prime }-f_{1}^{\prime \prime }f_{4}^{\prime }\right) }{\sqrt{-f_{1}^{\prime
2}+f_{4}^{\prime 2}}};h_{12}^{4}=0;
\end{eqnarray*}%
and from (2.12) the mean curvature vector $H$ of the rotational surface $%
S^{_{^{14}}}$ is%
\begin{equation*}
H=\left\{ 
\begin{array}{c}
\frac{f_{1}f_{4}\left( \overset{..}{x}\overset{.}{\alpha }+\overset{.}{x}%
\overset{..}{\alpha }\right) }{2\sqrt{f_{4}^{2}\overset{.}{\alpha }%
^{2}-f_{1}^{2}\overset{.}{x}^{2}}} \\ 
+\frac{f_{4}^{\prime }f_{1}\overset{.}{x}^{2}-f_{1}^{\prime }f_{4}\overset{.}%
{\alpha }^{2}}{2\sqrt{-f_{1}^{\prime 2}+f_{4}^{\prime 2}}}%
\end{array}%
\right\} e_{3}+\left\{ \frac{\left( f_{1}^{\prime }f_{4}^{\prime \prime
}-f_{1}^{\prime \prime }f_{4}^{\prime }\right) }{2\sqrt{-f_{1}^{\prime
2}+f_{4}^{\prime 2}}}\right\} e_{4}.
\end{equation*}

The Gaussian curvature $K$ of the rotational surface $S^{_{^{14}}}$ is
obtained as%
\begin{equation*}
K=\underset{s=3}{\overset{4}{\sum }}\varepsilon _{s}\left[ h_{ij}^{s}\right]
=\frac{\left( 
\begin{array}{c}
f_{1}^{\prime }f_{4}^{{}} \\ 
-f_{1}f_{4}^{\prime }%
\end{array}%
\right) ^{2}\left( \overset{.}{x}\overset{.}{\alpha }\right) ^{2}}{f_{4}^{2}%
\overset{.}{\alpha }^{2}-f_{1}^{2}\overset{.}{x}^{2}}+\frac{\left( 
\begin{array}{c}
f_{1}^{\prime }f_{4}\overset{.}{\alpha }^{2} \\ 
-f_{4}^{\prime }f_{1}\overset{.}{x}^{2}%
\end{array}%
\right) \left( 
\begin{array}{c}
f_{1}^{\prime }f_{4}^{\prime \prime } \\ 
-f_{1}^{\prime \prime }f_{4}^{\prime }%
\end{array}%
\right) }{-f_{1}^{\prime 2}+f_{4}^{\prime 2}}.
\end{equation*}

Secondly, for the rotations $\Omega _{2}$ and $\Omega _{3}$, by using the
curve $\gamma (s)$, the surface of rotation $S_{23}$ around $\Pi _{\Omega
_{2}}(y).\Pi _{\Omega _{3}}(z)$ is given as follows%
\begin{equation}
S_{23}(y,z,s)=\Pi _{\Omega _{2}}(y).\Pi _{\Omega _{3}}(z).%
\begin{bmatrix}
f_{1}(s) \\ 
f_{2}(s) \\ 
f_{3}(s) \\ 
f_{4}(s)%
\end{bmatrix}%
=\left( 
\begin{array}{c}
f_{1}\cosh y+f_{4}\sinh y, \\ 
f_{2}\cosh z+f_{3}\sinh z, \\ 
f_{2}\sinh z+f_{3}\cosh z, \\ 
f_{1}\sinh y+f_{4}\cosh y%
\end{array}%
\right) ,  \tag{3.26a}
\end{equation}%
where $-\infty <z,y<\infty ,s\in I.$ Now, we consider the following the
surface of rotation%
\begin{equation}
S^{_{^{23}}}(y,z,s)=\left( f_{1}\cosh y,f_{2}\cosh z,f_{2}\sinh z,f_{1}\sinh
y\right) ,  \tag{3.26b}
\end{equation}%
where $f_{1}$ and $f_{2}$ are non-zero smooth functions and the curve $%
\gamma (s)=(f_{1}(s),f_{2}(s),0,0)$ lies on the $\xi \rho -$plane. For
(3.26b) we have the parametrizations 
\begin{eqnarray*}
S_{t}^{_{^{23}}}(y,z,s) &=&\left( f_{1}\overset{.}{y}\sinh y,f_{2}\overset{.}%
{z}\sinh z,f_{2}\overset{.}{z}\cosh z,f_{1}\overset{.}{y}\cosh y\right) ; \\
S_{tt}^{_{^{23}}}(y,z,s) &=&\left( 
\begin{array}{c}
f_{1}(\overset{..}{y}\sinh y+\overset{.}{y}^{2}\cosh y),f_{2}(\overset{..}{z}%
\sinh z+\overset{.}{z}^{2}\cosh z), \\ 
f_{2}(\overset{..}{z}\cosh z+\overset{.}{z}^{2}\sinh z),f_{1}(\overset{..}{y}%
\cosh y+\overset{.}{y}^{2}\sinh y)%
\end{array}%
\right) ; \\
S_{s}^{_{^{23}}}(y,z,s) &=&\left( f_{1}^{\prime }\cosh y,f_{2}^{\prime
}\cosh z,f_{2}^{\prime }\sinh z,f_{1}^{\prime }\sinh y\right)  \\
S_{ss}^{^{23}}(y,z,s) &=&\left( f_{1}^{\prime \prime }\cosh y,f_{2}^{\prime
\prime }\cosh z,f_{2}^{\prime \prime }\sinh z,f_{1}^{\prime \prime }\sinh
y\right) ; \\
S_{st}^{_{^{23}}}(y,z,s) &=&\left( f_{1}^{\prime }\overset{.}{y}\sinh
y,f_{2}^{\prime }\overset{.}{z}\sinh z,f_{2}^{\prime }\overset{.}{z}\cosh
z,f_{1}^{\prime }\overset{.}{y}\cosh y\right) 
\end{eqnarray*}%
and 
\begin{equation*}
\left\langle S_{s}^{^{23}},S_{s}^{_{^{23}}}\right\rangle =-f_{1}^{\prime
2}-f_{2}^{\prime 2}<0;\left\langle
S_{t}^{_{^{23}}},S_{t}^{_{^{23}}}\right\rangle =f_{2}^{2}\overset{.}{z}%
+f_{1}^{2}\overset{.}{y}>0.
\end{equation*}%
Hence, the following moving frame $e_{1},e_{2},e_{3},e_{4}$ can be chosen,
such that $e_{1},e_{2}$ are tangent to $S^{_{^{23}}}$ and $e_{3},$ $e_{4}$
are normal to $S^{_{^{23}}}$, we obtain as follows 
\begin{eqnarray*}
e_{1} &=&\frac{\left( f_{1}\overset{.}{y}\sinh y,f_{2}\overset{.}{z}\sinh
z,f_{2}\overset{.}{z}\cosh z,f_{1}\overset{.}{y}\cosh y\right) }{\sqrt{%
f_{2}^{2}\overset{.}{z}+f_{1}^{2}\overset{.}{y}}}; \\
e_{2} &=&\frac{\left( f_{1}^{\prime }\cosh y,f_{2}^{\prime }\cosh
z,f_{2}^{\prime }\sinh z,f_{1}^{\prime }\sinh y\right) }{\sqrt{f_{1}^{\prime
2}+f_{2}^{\prime 2}}}; \\
e_{3} &=&\frac{\left( f_{2}\overset{.}{z}\sinh y,f_{1}\overset{.}{y}\sinh
z,f_{1}\overset{.}{y}\cosh z,f_{2}\overset{.}{z}\cosh y\right) }{\sqrt{%
f_{2}^{2}\overset{.}{z}+f_{1}^{2}\overset{.}{y}}}; \\
e_{4} &=&\frac{\left( f_{2}^{\prime }\cosh y,f_{1}^{\prime }\cosh
z,f_{1}^{\prime }\sinh z,f_{2}^{\prime }\sinh y\right) }{\sqrt{f_{1}^{\prime
2}+f_{2}^{\prime 2}}}
\end{eqnarray*}%
Also, we have $\varepsilon _{1,3}=1;\varepsilon _{2,4}=-1.$ For the
equations (2.9), (2.10), (2.11), (2.12), the following coefficients of the
second fundamental form $h$ and the connection forms are obtained as 
\begin{eqnarray*}
h_{11}^{3} &=&\frac{f_{1}^{{}}f_{2}(\overset{.}{y}\overset{..}{z}+\overset{..%
}{y}\overset{.}{z})}{\sqrt{f_{2}^{2}\overset{.}{z}+f_{1}^{2}\overset{.}{y}}}%
;h_{12}^{3}=\frac{\left( f_{1}f_{2}^{\prime }+f_{1}^{\prime
}f_{2}^{{}}\right) \overset{.}{y}\overset{.}{z}}{\sqrt{f_{2}^{2}\overset{.}{z%
}+f_{1}^{2}\overset{.}{y}}};h_{22}^{3}=0; \\
h_{11}^{4} &=&\frac{-f_{1}^{{}}f_{2}^{\prime }\overset{.}{y}%
^{2}-f_{1}^{\prime }f_{2}\overset{.}{z}^{2}}{\sqrt{f_{1}^{\prime
2}+f_{2}^{\prime 2}}};h_{22}^{4}=\frac{-f_{1}^{\prime \prime }f_{2}^{\prime
}-f_{1}^{\prime }f_{2}^{\prime \prime }}{\sqrt{f_{1}^{\prime
2}+f_{2}^{\prime 2}}};h_{12}^{4}=0;
\end{eqnarray*}%
and from (2.12) the Gaussian curvature $K$ and the mean curvature vector $H$
of the rotational surface $S^{_{^{23}}}$ are obtained as follows%
\begin{eqnarray*}
H &=&\frac{f_{1}^{{}}f_{2}(\overset{.}{y}\overset{..}{z}+\overset{..}{y}%
\overset{.}{z})}{2\sqrt{f_{2}^{2}\overset{.}{z}+f_{1}^{2}\overset{.}{y}}}%
e_{3}+\frac{f_{1}^{{}}f_{2}^{\prime }\overset{.}{y}^{2}+f_{1}^{\prime }f_{2}%
\overset{.}{z}^{2}-f_{1}^{\prime \prime }f_{2}^{\prime }-f_{1}^{\prime
}f_{2}^{\prime \prime }}{2\sqrt{f_{1}^{\prime 2}+f_{2}^{\prime 2}}}e_{4} \\
K &=&-\frac{\left( f_{1}f_{2}^{\prime }+f_{1}^{\prime }f_{2}^{{}}\right)
^{2}\left( \overset{.}{y}\overset{.}{z}\right) ^{2}}{f_{2}^{2}\overset{.}{z}%
+f_{1}^{2}\overset{.}{y}}-\frac{\left( f_{1}^{{}}f_{2}^{\prime }\overset{.}{y%
}^{2}+f_{1}^{\prime }f_{2}\overset{.}{z}^{2}\right) \left( f_{1}^{\prime
\prime }f_{2}^{\prime }+f_{1}^{\prime }f_{2}^{\prime \prime }\right) }{%
f_{1}^{\prime 2}+f_{2}^{\prime 2}}.
\end{eqnarray*}

Also, $\Omega _{5}$ and $\Omega _{6}$ generate abelian sub-algebra being
elliptic. Therefore, we can write matrix $\Pi _{\Omega _{5}}(\beta )\Pi
_{\Omega _{6}}(\theta )$ being the rotational group of matrices. This
subgroup doesn't fix any axis and so it is not a rotation about any axis.
For the rotations $\Omega _{5}$ and $\Omega _{6},$ the matrices of rotations
of this surface can be written as $\Pi _{\Omega _{5}}(\beta )\Pi _{\Omega
_{6}}(\theta )$, by using a planar curve $\gamma $ with $s$ parameter the
surface of rotation $S_{56}$ around $\Pi _{\Omega _{5}}(\beta ).\Pi _{\Omega
_{6}}(\theta )$ can be parametrized as follows 
\begin{equation}
S_{56}(\beta ,\theta ,s)=\Pi _{\Omega _{5}}(\beta ).\Pi _{\Omega
_{6}}(\theta ).%
\begin{bmatrix}
f_{1}(s) \\ 
f_{2}(s) \\ 
f_{3}(s) \\ 
f_{4}(s)%
\end{bmatrix}%
=\left( 
\begin{array}{c}
f_{1}\cos \beta +f_{2}\sin \beta , \\ 
-f_{1}\sin \beta +f_{2}\cos \beta , \\ 
f_{3}\cos \theta +f_{4}\sin \theta , \\ 
-f_{3}\sin \theta +f_{4}\cos \theta 
\end{array}%
\right) ,  \tag{3.27a}
\end{equation}%
where $-\infty <\beta ,\theta <\infty ,s\in I.$ Now, we consider the
following rotational surface%
\begin{equation}
S^{_{^{56}}}(\beta \left( t\right) ,\theta \left( t\right) ,s)=\left(
f_{2}\sin \beta ,f_{2}\cos \beta ,f_{4}\sin \theta ,f_{4}\cos \theta \right)
,  \tag{3.27b}
\end{equation}%
where $f_{2}$ and $f_{4}$ are non-zero smooth functions and the curve $%
\gamma (s)=(0,f_{2}(s),0,f_{4}(s))$ lies on the $\rho \eta -$plane. From
(3.27b) we have the parametrizations 
\begin{eqnarray*}
S_{s}^{_{^{56}}}(\beta ,\theta ,s) &=&\left( f_{2}^{\prime }\sin \beta
,f_{2}^{\prime }\cos \beta ,f_{4}^{\prime }\sin \theta ,f_{4}^{\prime }\cos
\theta \right) ; \\
S_{ss}^{_{^{56}}}(\beta ,\theta ,s) &=&\left( f_{2}^{\prime \prime }\sin
\beta ,f_{2}^{\prime \prime }\cos \beta ,f_{4}^{\prime \prime }\sin \theta
,f_{4}^{\prime \prime }\cos \theta \right) ; \\
S_{st}^{_{^{56}}}(\beta ,\theta ,s) &=&\left( f_{2}^{\prime }\overset{.}{%
\beta }\cos \beta ,-f_{2}^{\prime }\overset{.}{\beta }\sin \beta
,f_{4}^{\prime }\overset{.}{\theta }\cos \theta ,-f_{4}^{\prime }\overset{.}{%
\theta }\sin \theta \right) ; \\
S_{t}^{_{^{56}}}(\beta ,\theta ,s) &=&\left( f_{2}\overset{.}{\beta }\cos
\beta ,-f_{2}\overset{.}{\beta }\sin \beta ,f_{4}\overset{.}{\theta }\cos
\theta ,-f_{4}\overset{.}{\theta }\sin \theta \right) ; \\
S_{tt}^{_{^{56}}}(\beta ,\theta ,s) &=&\left( 
\begin{array}{c}
f_{2}(\overset{..}{\beta }\cos \beta -\overset{.}{\beta }^{2}\sin \beta
),-f_{2}(\overset{..}{\beta }\sin \beta +\overset{.}{\beta }^{2}\cos \beta ),
\\ 
f_{4}(\overset{..}{\theta }\cos \theta -\overset{.}{\theta }^{2}\sin \theta
),-f_{4}(\overset{..}{\theta }\sin \theta +\overset{.}{\theta }^{2}\cos
\theta )%
\end{array}%
\right) 
\end{eqnarray*}%
and 
\begin{equation*}
\left\langle S_{s}^{_{^{56}}},S_{s}^{_{^{56}}}\right\rangle =-f_{2}^{\prime
2}+f_{4}^{\prime 2}>0;\left\langle S_{t}^{^{56}},S_{t}^{^{56}}\right\rangle
=-f_{2}^{2}\overset{.}{\beta }^{2}+f_{4}^{2}\overset{.}{\theta }<0.
\end{equation*}

For the following moving frame $e_{1},e_{2},e_{3},e_{4}$, we say that $%
e_{1},e_{2}$ are tangent to $S^{_{^{56}}}$ and $e_{3},e_{4}$ are normal to $%
S^{_{^{56}}}.$ Therefore, we get 
\begin{eqnarray*}
e_{1} &=&\frac{\left( f_{2}\overset{.}{\beta }\cos \beta ,-f_{2}\overset{.}{%
\beta }\sin \beta ,f_{4}\overset{.}{\theta }\cos \theta ,-f_{4}\overset{.}{%
\theta }\sin \theta \right) }{\sqrt{-f_{2}^{2}\overset{.}{\beta }%
^{2}+f_{4}^{2}\overset{.}{\theta }}}; \\
e_{2} &=&\frac{\left( f_{2}^{\prime }\sin \beta ,f_{2}^{\prime }\cos \beta
,f_{4}^{\prime }\sin \theta ,f_{4}^{\prime }\cos \theta \right) }{\sqrt{%
-f_{2}^{\prime 2}+f_{4}^{\prime 2}}} \\
e_{3} &=&\frac{\left( -f_{4}\overset{.}{\theta }\cos \beta ,f_{4}\overset{.}{%
\theta }\sin \beta ,-f_{2}\overset{.}{\beta }\cos \theta ,f_{2}\overset{.}{%
\beta }\sin \theta \right) }{\sqrt{-f_{2}^{2}\overset{.}{\beta }%
^{2}+f_{4}^{2}\overset{.}{\theta }}}; \\
e_{4} &=&\frac{\left( f_{4}^{\prime }\sin \beta ,f_{4}^{\prime }\cos \beta
,f_{2}^{\prime }\sin \theta ,f_{2}^{\prime }\cos \theta \right) }{\sqrt{%
-f_{2}^{\prime 2}+f_{4}^{\prime 2}}},
\end{eqnarray*}%
and we also get $\varepsilon _{1,4}=-1;\varepsilon _{2,3}=1.$ By considering
(2.9), (2.10), (2.11), (2.12), we obtain the following coefficients of the
second fundamental form $h$ and the connection forms 
\begin{eqnarray*}
h_{11}^{3} &=&\frac{f_{4}f_{2}\left( \overset{.}{\theta }\overset{..}{\beta }%
-\overset{.}{\beta }\overset{..}{\theta }\right) }{\sqrt{-f_{2}^{2}\overset{.%
}{\beta }^{2}+f_{4}^{2}\overset{.}{\theta }}};h_{12}^{3}=\frac{\left(
f_{2}^{\prime }f_{4}-f_{2}f_{4}^{\prime }\right) \overset{.}{\beta }\overset{%
.}{\theta }}{\sqrt{-f_{2}^{2}\overset{.}{\beta }^{2}+f_{4}^{2}\overset{.}{%
\theta }}};h_{22}^{3}=0; \\
h_{11}^{4} &=&\frac{f_{4}^{\prime }f_{2}\overset{.}{\beta }%
^{2}-f_{2}^{\prime }f_{4}\overset{.}{\theta }^{2}}{\sqrt{-f_{2}^{\prime
2}+f_{4}^{\prime 2}}};h_{22}^{3}=\frac{\left( -f_{2}^{\prime \prime
}f_{4}^{\prime }+f_{2}^{\prime }f_{4}^{\prime \prime }\right) }{\sqrt{%
-f_{2}^{\prime 2}+f_{4}^{\prime 2}}};h_{12}^{4}=0;
\end{eqnarray*}%
and from (2.12) the mean curvature vector $H$ of the rotation surface $%
S^{_{^{56}}}$ is%
\begin{equation*}
H=-\frac{f_{4}f_{2}\left( \overset{.}{\theta }\overset{..}{\beta }-\overset{.%
}{\beta }\overset{..}{\theta }\right) }{2\sqrt{-f_{2}^{2}\overset{.}{\beta }%
^{2}+f_{4}^{2}\overset{.}{\theta }}}e_{3}+\frac{\left( f_{4}^{\prime }f_{2}%
\overset{.}{\beta }^{2}-f_{2}^{\prime }f_{4}\overset{.}{\theta }%
^{2}+f_{2}^{\prime \prime }f_{4}^{\prime }-f_{2}^{\prime }f_{4}^{\prime
\prime }\right) }{2\sqrt{-f_{2}^{\prime 2}+f_{4}^{\prime 2}}}e_{4}
\end{equation*}%
and the Gaussian curvature $K$ of the rotation surface $S^{^{_{56}}}$ is
obtained as%
\begin{equation*}
K=-\frac{\left( f_{2}^{\prime }f_{4}-f_{2}f_{4}^{\prime }\right) ^{2}\left( 
\overset{.}{\beta }\overset{.}{\theta }\right) ^{2}}{-f_{2}^{2}\overset{.}{%
\beta }^{2}+f_{4}^{2}\overset{.}{\theta }}-\frac{\left( -f_{2}^{\prime
\prime }f_{4}^{\prime }+f_{2}^{\prime }f_{4}^{\prime \prime }\right)
(f_{4}^{\prime }f_{2}\overset{.}{\beta }^{2}-f_{2}^{\prime }f_{4}\overset{.}{%
\theta }^{2})^{2}}{-f_{2}^{\prime 2}+f_{4}^{\prime 2}}.
\end{equation*}
\end{proof}

\begin{example}
We consider the surfaces of rotation given as follows

\begin{enumerate}
\item For the curve $\gamma (s)=(s+\sinh s,0,0,s+\cosh s)$ the hyperbolic
surface of rotation is given as 
\begin{equation*}
S_{1}(x,\alpha ,s)=\left( 
\begin{array}{c}
(s+\sinh s)\cosh x,(s+\cosh s)\sinh \alpha , \\ 
(s+\sinh s)\sinh x,(s+\cosh s)\cosh \alpha 
\end{array}%
\right) .
\end{equation*}

\item For the curve $\gamma (s)=(s\cosh s,s\sinh s,0,0)$ the hyperbolic
surface of rotation is given as 
\begin{equation*}
S_{2}(y,z,s)=\left( s\cosh s\cosh y,s\sinh s\cosh z,s\sinh s\sinh z,s\cosh
s\sinh y\right) .
\end{equation*}

\item For the curve $\gamma (s)=(0,ax^{2}\sin s,0,ax^{2}\cos s)$ the
elliptic surface of rotation is given as 
\begin{equation*}
S_{3}(\beta ,\theta ,s)=\left( ax^{2}\sin s\sin \beta ,ax^{2}\sin s\cos
\beta ,ax^{2}\cos s\sin \theta ,ax^{2}\cos s\cos \theta \right) ;a,c\in 
%TCIMACRO{\U{211d} }%
%BeginExpansion
\mathbb{R}
%EndExpansion
.
\end{equation*}
\end{enumerate}
\end{example}

\section{Conclusion}

In this paper, we gave different types of matrices of rotation which are the
subgroups of the manifold $M$ corresponding to rotation about a chosen axis
in $E^{4}$. Hence, we used two parameter matrices groups of rotations and we
gave the matrices of rotation corresponding to the appropriate subgroup of
the $E_{2}^{4}$ and we defined a brief description of rotational surfaces
using a curve and matrices in $E_{2}^{4}$. Furthermore, we examined the
special rotated surfaces generated by these matrices of rotation in $%
E_{2}^{4}$ and we expressed some certain results of describing the surface
obtaining Killing vector field in $E_{2}^{4}$ in detail. Also, we gave the
Gaussian curvature and the mean curvature of the surfaces of rotation.

The authors are currently working on the properties of these rotated
surfaces with a view to devising suitable metric in $E_{2}^{4}$ by adapting
the type of conservation laws considered in the paper. In our future
studies, we will study geodesics on the rotational surface obtained in $%
E_{2}^{4}$. Also, the physical terms such as specific energy and specific
angular momentum will be examined with the help of the conditions obtained
by using the Clairaut's theorem on these special surfaces.

\section{Acknowledgements}

The authors wish to express their thanks to the authors of literatures for
the supplied scientific aspects and idea for this study. 
%Furthermore, we request to explain great thanks to reviewers for the constructive comments and inputs given to improve the quality of our work.

%\section{Authors' contributions}

\section{Funding}

Not applicable

\section{Conflicts of interest statement}

The authors have NO affiliations with or involvement in any organization or
entity with any financial interest or non-financial interest in the subject
matter or materials discussed in this manuscript.

\section{Declarations}

The authors declare that they have no known competing financial interests or
personal relationships that could have appeared to influence the work
reported in this paper.

\end{document}